\documentclass[11pt]{amsart}
\usepackage{amssymb,amsmath,amsthm,amscd}
\usepackage{graphicx}
\usepackage{hyperref}
\usepackage{color}
\usepackage[usenames,dvipsnames,svgnames,table]{xcolor}
\usepackage{tikz}
\usepackage{comment}
\usepackage{epstopdf}
\usepackage{float}
\usepackage{tikz-cd}
\usepackage{psfrag}
\newtheorem{lemma}{Lemma}[section]
\newtheorem{theorem}[lemma]{Theorem}

\theoremstyle{question}

\theoremstyle{definition}
\newtheorem*{definition}{Definition}
\theoremstyle{remark}

\newcommand{\C}{\mathbb{C}}

\newcommand{\Z}{\mathbb{Z}}


\renewcommand{\epsilon}{\varepsilon}
\renewcommand{\phi}{\varphi}
\renewcommand{\theta}{\vartheta}
\def\C{\mathbb{C}}
\def\H{\mathbb{H}}

\title[Rigidity of parabolic germs]{A rigidity result for some parabolic germs}
\author[L.~Lomonaco]{Luna Lomonaco}
\address{Instituto de Matem\'atica e Estat\'istica da Universidade de S\~ao Paulo,
Rua do Mat\~ao, 1010 - CEP 05508-090 - S\~ao Paulo - SP}
\email{lluna@ime.usp.br }

\author[S.~Mukherjee]{Sabyasachi Mukherjee}
\address{Institute for Mathematical Sciences, Stony Brook University, NY, 11794, USA}
\email{sabya@math.stonybrook.edu}

\date{\today}

\begin{document}
\begin{abstract}
The goal of this article is to prove a rigidity result for unicritical polynomials with parabolic cycles. More precisely, we show that if two unicritical polynomials have conformally conjugate parabolic germs, then the polynomials are affinely conjugate.
\end{abstract} 

\maketitle

\tableofcontents

\section{Introduction}

The study of conformal conjugacy classes of holomorphic germs fixing the origin is a classical area of research in complex analysis. Of particular interest (and rich structure) is the set of tangent-to-identity germs
\begin{equation*}
\mathrm{Diff}^{+1}(\C,0):=\{f(z)=z+az^{n+1}+\cdots; n\geq 1, a\neq 0, f\in\C\{z\}\}.
\end{equation*}
Such a germ is also called a parabolic germ of multiplier $1$ (this is the terminology we will use in this paper). For a parabolic germ of multiplier $1$ as above, the integer $n+1$ is called the multiplicity of the fixed point $0$. By the work of Ecalle and Voronin \cite{Ec1,Vor}, there exists an infinite-dimensional family of conformally different parabolic germs of multiplier $1$ with $0$ being a fixed point of multiplicity $n+1$.

In holomorphic dynamics (in particular, in the iteration theory of polynomials), parabolic fixed points play a crucial role. A fixed point $\hat{z}$ of a polynomial $P(z)$ (in one complex variable) is called parabolic if $P'(\hat{z})$ is a $q$-th root of unity. Evidently, the polynomial $P^{\circ q}$ (where $P^{\circ q}$ stands for the $q$-th iterate of $P$) satisfies $(P^{\circ q})'(\hat{z})=1$. Conjugating $P^{\circ q}$ by a translation that sends $\hat{z}$ to $0$, we obtain a polynomial parabolic germ of multiplier $1$ fixing the origin. Thus, a parabolic fixed point of a polynomial determines a polynomial element of $\mathrm{Diff}^{+1}(\C,0)$. Since polynomials are global objects, it is quite reasonable to expect that if the parabolic fixed points of two polynomials determine conformally conjugate elements of $\mathrm{Diff}^{+1}(\C,0)$, then the global dynamics of the two polynomials are intimately related (compare \cite[\S 3]{CEP}). The principal goal of this paper is to formalize and prove this heuristic idea for unicritical polynomials. 

Any unicritical polynomial of degree $d\geq 2$ can be affinely conjugated to a map of the form $f_c(z)=z^d+c$ (an affine conjugacy respects the conformal dynamics of a polynomial). The \emph{filled Julia set} $K(f_c)$ is defined as the set of all points which remain bounded under all iterations of $f_c$. The boundary of the filled Julia set is defined to be the \emph{Julia set} $J(f_c)$. The degree $d$ multibrot set $\mathcal{M}_d$ is the connectedness locus of degree $d$ unicritical polynomials; i.e. $$\mathcal{M}_d:=\{c\in\C: K(f_c)\ \mathrm{is\ connected}\}.$$ The multibrot set of degree $2$ is called the Mandelbrot set.

A parameter $c\in\mathcal{M}_d$ is called \emph{hyperbolic} if the forward orbit of the unique critical point $0$ of $f_c$ converges to an attracting cycle. A connected component $H$ of the set of all hyperbolic parameters is called a \emph{hyperbolic component} of $\mathcal{M}_d$. Every parameter on the closure of a hyperbolic component $H$ has a unique non-repelling cycle (uniqueness is a consequence of unicriticality). The multiplier of this unique non-repelling cycle defines a $(d-1)$-fold map (which is holomorphic in the interior of $H$ and continuous up to $\partial H$) from the closure of $H$ onto the closure of the unit disk in the complex plane. For the Mandelbrot set, i.e. for $d=2$, this map yields an actual homeomorphism. This map is called the \emph{multiplier map}, and is denoted by $\lambda_{\overline{H}}$. 

A parameter $c$ of $\mathcal{M}_d$ is called a parabolic parameter if $f_c$ has a periodic cycle with multiplier a root of unity. Note that every parabolic cycle of a polynomial attracts the forward orbit of at least one critical point \cite[Theorem 10.15]{M1new}. Since the polynomials $f_c$ have a unique critical point, it follows that $f_c$ can have at most one parabolic cycle. Every parabolic parameter of $\mathcal{M}_d$ lies on the boundary of some hyperbolic component $H$. A parabolic parameter $c$ is called the \emph{root} of a hyperbolic component $H$ of period greater than $1$ if $\lambda_{\overline{H}}(c)=1$ and the parabolic cycle of $f_c$ disconnects $J(f_c)$ (the period $1$ hyperbolic component
is exceptional in the sense that, in the dynamical plane of its root, the parabolic cycle does not disconnect the Julia set). On the other hand, a parabolic parameter $c$ is called a \emph{co-root} of a hyperbolic component $H$ if $\lambda_{\overline{H}}(c)=1$ and the parabolic cycle of $f_c$ does not disconnect $J(f_c)$. There are exactly one root and $(d-2)$ co-roots on the boundary of every hyperbolic component $H$ (of period greater than $1$) of $\mathcal{M}_d$ (compare \cite[Theorem 1]{EMS}). In particular, there is no co-root in the Mandelbrot set. 

For $i=1, 2$, let $c_i$ be a root or co-root point of a hyperbolic component $H_i$ of period $n_i$ of $\mathcal{M}_d$, and $z_i$ be the characteristic parabolic point of $f_{c_i}$, i.e. the parabolic periodic point on the boundary of the critical value Fatou component (where the critical value Fatou component is the Fatou component containing the critical value). It is worthwhile to note that under the above assumptions, $(f_{c_i}^{\circ n_i})'(z_i)=1$; i.e. the restriction of $f_{c_i}^{\circ n_i}$ to a neighborhood of $z_i$ determines an element of $\mathrm{Diff}^{+1}(\C,0)$. The following theorem is the main result of this paper.

\begin{theorem}[Parabolic Germs Determine Parabolic Parameters]\label{Rigidity_Satellite}
Suppose that there exist small neighborhoods $N_1$ and $N_2$ of $z_1$ and $z_2$ (in the dynamical planes of $c_1$ and $c_2$ respectively) such that $f_{c_1}^{\circ n_1}\vert_{N_1}$ and $f_{c_2}^{\circ n_2}\vert_{N_2}$ are conformally conjugate (i.e. $f_{c_1}^{\circ n_1}\vert_{N_1}$ and $f_{c_2}^{\circ n_2}\vert_{N_2}$ determine conformally conjugate elements of $\mathrm{Diff}^{+1}(\C,0)$). Then $f_{c_1}$ and $f_{c_2}$ are affinely conjugate. 
\end{theorem}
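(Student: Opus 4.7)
The plan is to promote the germ conjugacy $\phi:N_1 \to N_2$ step by step to a global conformal conjugacy between $f_{c_1}$ and $f_{c_2}$. Since any conformal conjugacy between two monic centered polynomials of degree $d$ must be of the form $z\mapsto \zeta z$ with $\zeta^{d-1}=1$, such a global conjugacy immediately produces the desired affine conjugacy.

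Write $F_i := f_{c_i}^{\circ n_i}$ and let $U_i$ denote the critical value Fatou component of $f_{c_i}$. The first step is to extend $\phi$ from $N_1$ to all of $U_1$. Since $z_i$ is a parabolic fixed point of $F_i$ on $\partial U_i$, every orbit in $U_i$ under $F_i$ eventually enters any prescribed neighborhood of $z_i$. One can thus define the extension on $w\in U_1$ by $\phi(w):= G\circ\phi\circ F_1^{\circ k}(w)$, where $k$ is chosen large enough that $F_1^{\circ k}(w)\in N_1$ and $G$ is the $k$-fold inverse branch of $F_2$ landing at $z_2$ inside $U_2$. The germ conjugacy relation makes this definition independent of $k$ and yields a conformal conjugacy $\phi:U_1 \to U_2$.

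The next step is to extend $\phi$ to the entire interior of $K(f_{c_1})$. Since $F_i\vert_{U_i}$ is a proper holomorphic self-map of degree $d$ whose unique critical value is $c_i$, the extended $\phi$ must send $c_1$ to $c_2$. Letting $V_i$ denote the Fatou component of $f_{c_i}$ that contains the critical point $0$, the branched cover $f_{c_i}:V_i\to U_i$ is ramified only at $0$ over $c_i$; matching of critical values allows $\phi\vert_{U_1}$ to lift uniquely to a conformal conjugacy $V_1 \to V_2$ sending $0\mapsto 0$ and satisfying $\phi\circ f_{c_1} = f_{c_2}\circ \phi$ on $V_1$. The remaining Fatou components in the period-$n_i$ cycle are reached by univalent lifts, and iterated backward pull-backs, all unramified since $0$ lies in no Fatou component other than $V_i$, extend $\phi$ to every bounded Fatou component of $f_{c_1}$. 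Because the only critical orbit of $f_{c_1}$ converges to the parabolic cycle, these bounded Fatou components exhaust the interior of $K(f_{c_1})$.

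The final step is to cross the Julia set. Since $K(f_{c_i})$ is connected, the B\"ottcher coordinates yield a canonical conformal conjugacy between the two basins of infinity, unique up to post-composition with a rotation by a $(d-1)$-st root of unity. I would pin down this rotation using the combinatorics of external rays landing at $z_1$ and $z_2$, thereby producing a homeomorphism of $\C$ that conjugates $f_{c_1}$ to $f_{c_2}$ and is conformal off $J(f_{c_i})$. The principal obstacle is to upgrade this to a genuinely holomorphic conjugacy. I expect this to proceed by a quasi-conformal (or David) surgery combined with the absence of measurable invariant line fields on $J(f_{c_i})$, or equivalently by conformal removability of these parabolic Julia sets; establishing the appropriate regularity of $J(f_{c_i})$ to justify such an argument is where the main analytic work lies.
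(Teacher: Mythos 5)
Your overall strategy --- promoting the germ conjugacy outward step by step --- is in the spirit of the paper, but Step 1 contains the central gap, and it is precisely the point where the paper has to work hardest. The map $F_i=f_{c_i}^{\circ n_i}$ restricted to the critical value Fatou component $U_i$ is a \emph{degree $d$ branched} self-cover whose critical value is $c_i$, so $F_1^{\circ k}\vert_{U_1}$ has degree $d^k$ and the formula $\phi(w):=G\circ\phi\circ F_1^{\circ k}(w)$ is not a well-defined single-valued map: the inverse branch $G$ of $F_2^{\circ k}$ can be continued over $\phi\bigl(F_1^{\circ k}(U_1)\cap N_1\bigr)$ and descend to a map of $U_1$ only if the monodromy matches, i.e.\ only if $\phi$ carries the critical orbit of $F_1\vert_{U_1}$ to that of $F_2\vert_{U_2}$. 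In attracting Fatou coordinates this is the condition $\psi^{\mathrm{att}}_{c_1}(c_1)-\psi^{\mathrm{att}}_{c_2}(c_2)\in\mathbb{Z}$ (with the coordinates normalized by $\phi$), and an arbitrary germ conjugacy gives you no control over this difference. This is exactly what the paper's Lemma \ref{equivalence} establishes via the theory of extended horn maps: the horn maps are conformal invariants of the germ, each has a unique critical value $e^{2\pi i\psi^{\mathrm{att}}(c_i)}$ by \cite[Proposition 4]{BE}, so equality of the horn maps forces the integer condition, after which one corrects by a finite iterate of the dynamics before extending. There is also a second, unaddressed issue in Steps 1--2: you only have a conjugacy between the iterates $f_{c_1}^{\circ n_1}$ and $f_{c_2}^{\circ n_2}$, not between $f_{c_1}$ and $f_{c_2}$, and your bookkeeping around the cycle of Fatou components tacitly assumes $n_1=n_2$ (and $k_1=k_2$), which is part of what must be proved.

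Your Step 3 is a plan rather than a proof: you acknowledge that gluing the interior conjugacy to the B\"ottcher conjugacy along $J$, and then upgrading the resulting off-$J$ conformal homeomorphism to a genuine conformal (hence affine) map, requires removability of the parabolic Julia set or an invariant-line-field argument, and you do not supply it; the ray-combinatorics matching needed to even get a continuous global map across $J$ is also nontrivial. The paper sidesteps all of this by never leaving a neighborhood of the small filled Julia set: it extends the conjugacy only to a neighborhood of $K(h_1)$ (spreading it from the characteristic ear by univalent restrictions of $f_{c_1}^{\circ n_1}$) and then invokes an existing rigidity theorem for conformally conjugate polynomial-like restrictions \cite[Corollary 10.2]{IM1} (and \cite[Theorem 1.4]{IM1} in the one-petal case), which also takes care of the period-matching issue. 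To repair your argument you would need, at minimum, the horn-map step to legitimize the extension into $U_1$, and a genuine proof (or precise citation) of the global rigidity you defer in Step 3.
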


Note that $f_{c_1}$ and $f_{c_2}$ are affinely conjugate if and only if $c_2/c_1$ is a $(d-1)$-st root of unity. Therefore, the above theorem states that the conformal conjugacy class of the parabolic germ restriction of a suitable iterate of $f_{c}$ determines $c$ uniquely up to the action of a finite rotation group.

The paper is organized as follows. In Section \ref{preliminars}, we recall some general facts about parabolic dynamics and parabolic parameters in the multibrot set $\mathcal{M}_d$. We review the theory
of parabolic-like maps (a parabolic analogue of polynomial-like maps introduced by the first author in \cite{Lu}) in Section \ref{parlike}. Section \ref{proof} is devoted to the proof of Theorem \ref{Rigidity_Satellite}. There are two essential steps in the proof of the theorem. In Lemma  \ref{rigidity_PL_maps}, we prove a rigidity theorem for parabolic-like maps. Subsequently in Lemma \ref{equivalence}, we prove a local-to-global principle to the effect that a local conformal conjugacy between certain parabolic germs can be promoted to a conformal conjugacy between suitable parabolic-like maps. The main theorem now follows by combining these two lemmas.

\section{Preliminaries}\label{preliminars}
Let $f(z)=z+az^{n+1}+\cdots$ be a parabolic germ of multiplier $1$. The local dynamics of such a germ is well-understood. Let us briefly review the situation for completeness. There are $2n$ open sectors $V_1^+, V_1^-, \cdots, V_n^+, V_n^-$ based at $0$ such that they cover a deleted neighborhood of the origin. For each $i=1, 2, \cdots, n$, we have $f(V_i^+)\subset V_i^+$ and the forward orbit (under $f$) of each point in $V_i^+$ converges to the parabolic fixed point $0$ (compare Figure \ref{petals}). These sectors are called \emph{attracting petals}. Moreover, there exists a conformal embedding $\psi^{\mathrm{att}, i}$ of $V_i^+$ into $\C$ such that the image contains a right-half plane and $\psi^{\mathrm{att}, i}$ conjugates $f$ to translation by $+1$. The maps $\psi^{\mathrm{att}, i}$ are called attracting Fatou coordinates. On the other hand, for each $i=1, 2, \cdots, n$, we have $f^{-1}(V_i^-)\subset V_i^-$ and the backward orbit (under $f$) of each point in $V_i^-$ converges to $0$. These sectors are called \emph{repelling petals}. There exists a a conformal embedding $\psi^{\mathrm{rep}, i}$ of $V_i^-$ into $\C$ such that the image contains a right-half plane and $\psi^{\mathrm{rep}, i}$ conjugates $f^{-1}$ to translation by $+1$. The maps $\psi^{\mathrm{rep}, i}$ are called repelling Fatou coordinates. These coordinates are unique up to addition of a complex constant. For a more rigorous description of the dynamics of parabolic germs and construction of Fatou coordinates, see \cite[\S 10]{M1new}\cite[\S 2.3]{FL}.

\begin{figure}[ht!]
\includegraphics[scale=0.12]{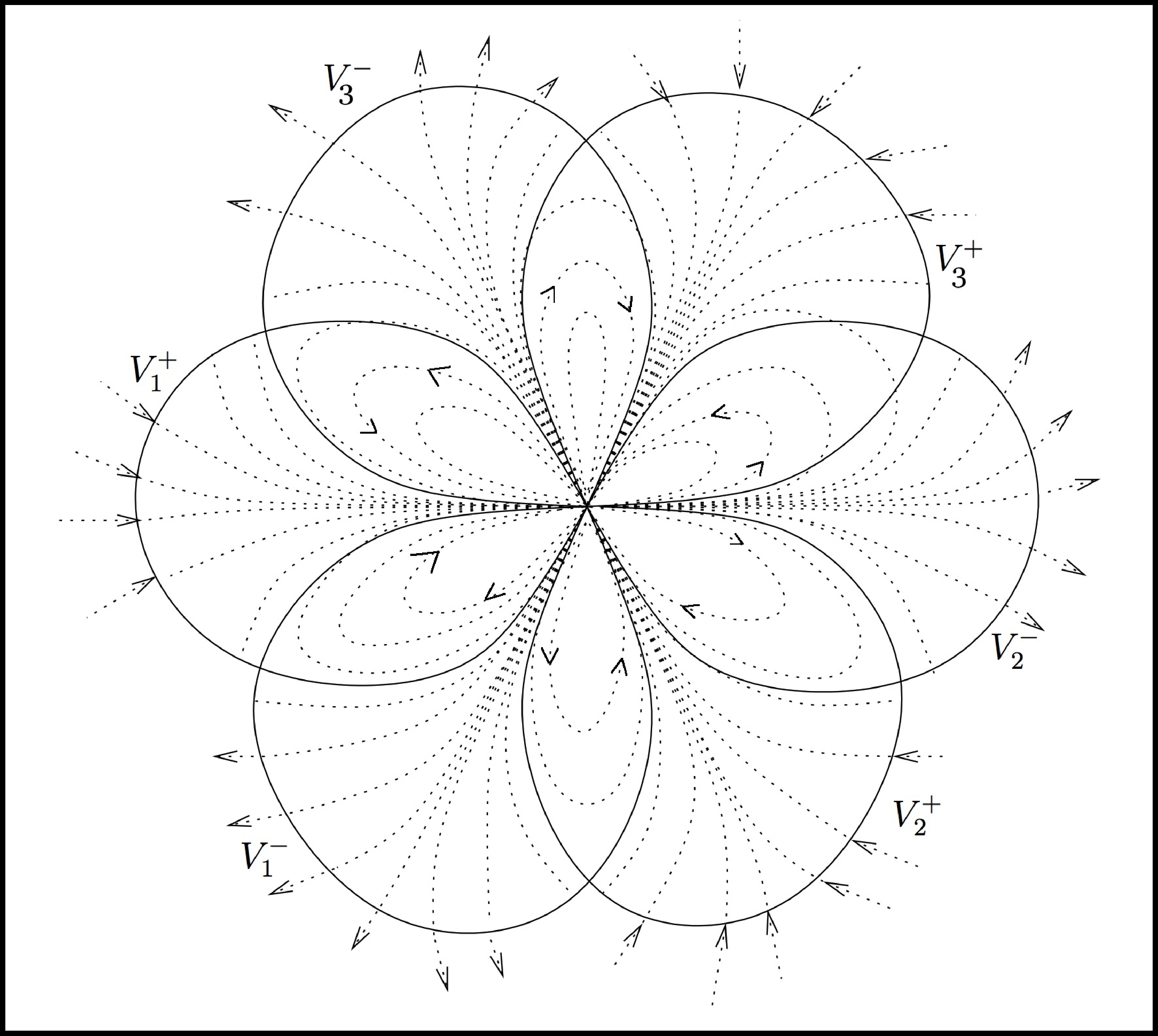}
\caption{Dynamics of a parabolic germ with three attracting and three repelling petals.}
\label{petals}
\end{figure}

For a general parabolic germ, these various Fatou coordinates do not agree on their common domains of definition; namely, on the intersection of two adjacent attracting and repelling petals. This leads to the collection of horn maps $$\lbrace \psi^{\mathrm{att},i}\circ\left(\psi^{\mathrm{rep},i}\right)^{-1}, \psi^{\mathrm{att},i+1}\circ\left(\psi^{\mathrm{rep},i}\right)^{-1}\rbrace_{i\in\Z/n\Z},$$ that record the difference between two adjacent attracting and repelling Fatou coordinates. The importance of horn maps stems from the fact that they are complete conformal conjugacy invariants of parabolic germs \cite{Ec1,Vor}. We refer the readers to \cite[\S 2]{BE} for a precise definition of horn maps. 

If a parabolic germ is obtained as a restriction of a globally defined polynomial, then the attracting and (the inverse of the) repelling Fatou coordinates have natural maximal domains of definition. The extensions are obtained by iterating the dynamics. Hence the associated horn maps extend as ramified coverings to certain natural maximal domains of definition. Definitions of these extended horn maps and their mapping properties can be found in \cite[\S 2.5]{BE}. The main property of extended horn maps that we will use in this paper is that they have finitely many critical values and these critical values are completely determined by the conformal positions of the critical points of the polynomial \cite[Proposition 4]{BE}\cite{Eps}. In particular, if the polynomial is unicritical, then each of these extended horn maps has a unique critical value.

Let us now recall some basic facts about the parabolic parameters of the multibrot set $\mathcal{M}_d$. Every parabolic parameter $c$ (i.e. a parameter $c$ such that $f_c$ has a periodic orbit of multiplier a root of unity) of $\mathcal{M}_d$ is either  the root or a co-root of a unique hyperbolic component $H$. We will denote the characteristic parabolic point of $f_c$ by $z_c$; i.e. $z_c$ is the unique parabolic periodic point of $f_c$ that lies on the boundary of the critical value Fatou component. A parabolic parameter $c$ is a root (respectively a co-root) of a hyperbolic component of period
greater than one if $z_c$ is a cut point of $K(f_c)$ (respectively, $z_c$ is not a cut point of $K(f_c)$), where $z_c$ is called a cut point of $K(f_c)$ if $K(f_c)\setminus\{z_c\}$ is disconnected. In the parameter plane, a corresponding dichotomy holds: a root parameter of a hyperbolic component of period $n>1$
is a cut point of $\mathcal{M}_d$, whereas a co-root parameter is not.

If $c$ is a co-root of a hyperbolic component $H$ of period $n$, then $f_c$ has a parabolic cycle of period $n$ and multiplier $1$. Since $z_c$ is not a cut point of $K(f_c)$, there is a unique attracting petal at $z_c$. Therefore, we have $$f_{c}^{\circ n}(z)=z+a_c(z-z_c)^2+O\left((z-z_c)^{3}\right)$$ for some $a_c \in \mathbb{C}^*$, as the Taylor series expansion of $f_{c}^{\circ n}$ near $z_c$. 

The situation for roots is a bit more complicated as they come in two different flavors. We say that a parabolic parameter $c$ in $\mathcal{M}_d$ is a \emph{primitive} root if $z_c$ is a cut point of $K(f_c)$ and there is a single attracting petal at $z_c$. A primitive root $c$ lies on the boundary of a unique hyperbolic component of $\mathcal{M}_d$. If the period of this unique hyperbolic component $H$ is $n$, then $f_c$ has an $n$-periodic parabolic cycle of multiplier $1$. As in the co-root case, the Taylor series of $f_{c}^{\circ n}$ near $z_c$ is given by $\left(z+a_c(z-z_c)^2+O\left((z-z_c)^{3}\right)\right)$ for some $a_c \in \mathbb{C}^*$. On the other hand, a parabolic parameter $c$ is called a \emph{satellite} root if $z_c$ is a cut point of $K(f_c)$ and there are at least two attracting petals at $z_c$. A satellite root $c$ is the (unique) common boundary point of two hyperbolic components $H$ and $H'$ of $\mathcal{M}_d$, one of which, say $H$, has $c$ as its root. Let the periods of the hyperbolic components $H$ and $H'$ be $n$ and $k$ respectively. Then the unique parabolic cycle of $f_c$ has period $k$ and multiplier a $q$-th root of unity, where $q=n/k\geq 2$. Moreover, the Taylor series expansion of $f_{c}^{\circ n}$ near $z_c$ is given by $\left(z+a_c(z-z_c)^{q+1}+O\left((z-z_c)^{q+2}\right)\right)$ for some $a_c \in \mathbb{C^*}$. In particular, there are $q$ attracting petals at the parabolic point $z_c$, and these petals are permuted transitively by $f_{c}^{\circ k}$. For proofs of these statements, see \cite[Lemma 17]{EMS}.

\section{Parabolic-like maps}\label{parlike}
The theory of parabolic-like maps extends the theory of polynomial-like maps to objects with a parabolic external class. For any polynomial map $P$ on the Riemann sphere $\widehat \C$, infinity is a superattracting fixed point, and the filled Julia set $K_P$ is the complement of the basin of attraction of infinity $\mathcal{A(\infty)}$, that is $K_P=\widehat \C \setminus \mathcal{A(\infty)}$. Thus, if $U$ is a suitable topological disk containing $K_P$ (for example, if the boundary of $U$ is a sufficiently large equipotential), then the preimage of $U$ is a topological disk $U'$ compactly contained in $U$, and $P_{|U'}:U'\rightarrow U$
 is a proper holomorphic map of degree $d=\mathrm{deg}(P)$. The triple $(P, U',U)$ is a (trivial) example of a polynomial-like map. Formally, a (degree $d$) polynomial-like map is a triple $(f, U',U)$ where $U'$ and $U$ are topological disks, $U'\subset \subset U$
 and $f: U'\rightarrow U$ is a (degree $d$) proper holomorphic map \cite{DH2}. The filled Julia set $K_f$ of a polynomial-like map is the set 
 of points which never leave $U'$ under iteration (for a polynomial $P$, this is just $K_P$). With any degree $d$ polynomial-like map, one can associate a degree $d$ 
 covering of the unit circle $h_f: \mathbb{S}^1 \rightarrow \mathbb{S}^1$ which encodes the dynamics of the polynomial-like map outside its filled Julia set. The map $h_f$ is called the \textit{external map} of the polynomial-like map $f$. The external map of a polynomial-like map is strictly expanding, with all periodic points repelling,
 and it is defined up to real-analytic diffeomorphisms of the circle. In this way a polynomial-like map can be considered as a union of two different dynamical systems:
 the filled Julia set $K_f$ and the external map $h_f$. By replacing the external map of a degree $d$ polynomial-like map with the map 
 $z \rightarrow z^d$ (which is an external map for a degree $d$ polynomial), Douady and Hubbard proved that every degree $d$ polynomial-like 
 map is hybrid equivalent to a polynomial of the same degree (where a hybrid equivalence is a quasiconformal conjugacy $\phi$
 with $\overline \partial \phi=0$ on $K_f$), and that this polynomial is unique if $K_f$ is connected.
 
A parabolic-like map is an object similar to a polynomial-like map, in the sense that it can be considered as the union of two different 
 dynamical systems: the filled Julia set and the external map \cite{Lu}. However, the external map of a parabolic-like map contains a parabolic fixed point, which complicates the setting considerably.
 \begin{figure}[hbt!]
\centering
\psfrag{A}{$\Omega$}
\psfrag{A'}{$\Delta'$}
\psfrag{gamma+}{$\gamma_+$}
\psfrag{gamma-}{$\gamma_-$}
\psfrag{AA}{$\Omega'$}
\psfrag{d}{$f:U' \stackrel{d:1}{\rightarrow}U$}
\psfrag{1}{$f:\Delta' \stackrel{1:1}{\rightarrow} \Delta$}
\psfrag{D}{$\Delta$}
\psfrag{E}{$z_0$}
\includegraphics[width= 10cm]{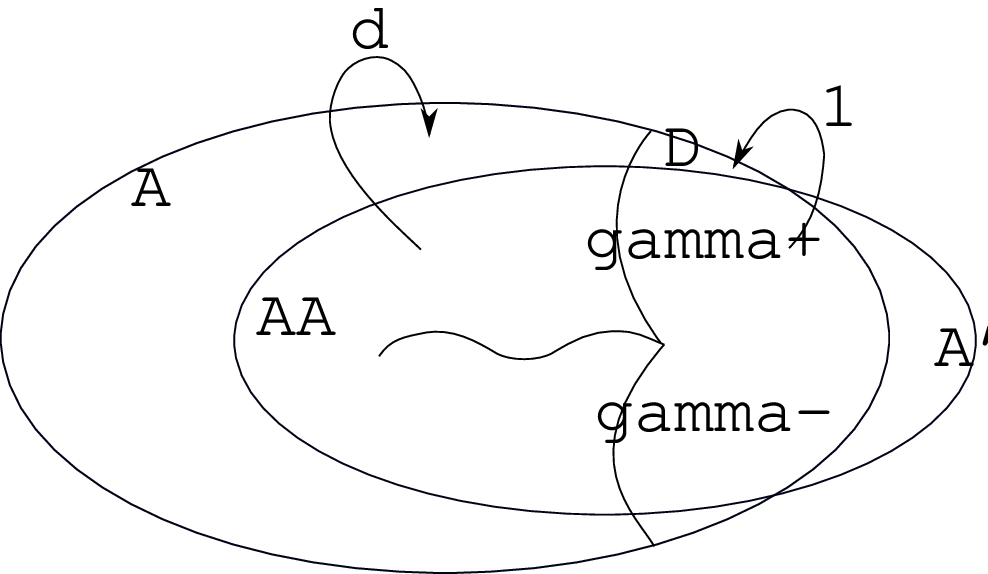}
\caption{\small For a parabolic-like map ($f,U',U,\gamma$), the arc $\gamma$ divides $U'$ and $U$
into $\Omega', \Delta'$
 and $\Omega, \Delta$ respectively. These sets are such that $\Omega'$ is compactly contained in $U$, $\Omega'\subset
\Omega$, $f:\Delta' \rightarrow \Delta$ is an isomorphism and $\Delta'$ contains at least one attracting fixed petal of the parabolic
fixed point.}
\label{AAA}
\end{figure}

\begin{definition}\label{definitionparlikemap} \textbf{(Parabolic-like maps)\,\,\,}
A \textit{parabolic-like map} of degree $d\geq2$ is a 4-tuple ($f,U',U,\gamma$) where 
\begin{itemize}
	\item $U'$ and $U$ are open subsets of $\C$, with $U',\,\, U$ and $U \cup U'$ isomorphic to a disc, and $U'$ not
contained in $U$,
	\item $f:U' \rightarrow U$ is a proper holomorphic map of degree $d\geq 2$ with a parabolic fixed point at $z=z_0$ of
 multiplier 1,
	\item $\gamma:[-1,1] \rightarrow \overline{U}$ is an arc with $\gamma(0)=z_0$, forward invariant under $f$, $C^1$
on $[-1,0]$ and on $[0,1]$, and such
that
$$f(\gamma(t))=\gamma(dt),\,\,\, \forall -\frac{1}{d} \leq t \leq \frac{1}{d},$$
$$\gamma([ \frac{1}{d}, 1)\cup (-1, -\frac{1}{d}]) \subseteq U \setminus U',\,\,\,\,\,\,\gamma(\pm 1) \in \partial U.$$
It resides in repelling petal(s) of $z_0$ and it divides $U'$ and $U$ into $\Omega', \Delta'$ and $\Omega, \Delta$
respectively, such that $\Omega' \subset \subset U$ 
(and $\Omega' \subset \Omega$), $f:\Delta' \rightarrow \Delta$ is an isomorphism (see Figure \ref{AAA}) and
$\Delta'$ contains at least one attracting fixed petal of $z_0$. We call the arc $\gamma$ a \textit{dividing arc}.

\end{itemize}
\end{definition}
The filled Julia set $K_f$ of a parabolic-like map $(f,U',U,\gamma)$ is the set of points which never leave $\Omega'\cup\{z_0\}$ under 
iteration. The model family in degree $2$ is given by the family of quadratic rational maps with a parabolic fixed point of multiplier $1$
at infinity (normalized by having critical points at $\pm1$), this is $Per_1(1)=\{[P_A] \,| \, P_A(z)=z+ 1/z+ A,\,\,A \in \C\}.$
The filled Julia set for $P_A$ with $A \neq 0$ is the complement of the parabolic basin of infinity $\mathcal{A}_A(\infty)$, so $K_A=\widehat \C \setminus \mathcal{A}_A(\infty)$ (for $A=0$ we need to make a choice, since both the left and right
half planes are parabolic basins. We set $K_{0}=\overline{\H}_l$). The map $h_2(z)= \frac{z^2+1/3}{z^2/3+1}$ is an external map for every $P_A,\,A\in \C$ \cite[Proposition 4.2]{Lu}. By replacing the external map of a degree $2$ parabolic-like map with the map $h_2$, the first author
proved \cite{Lu} that every degree $2$ parabolic-like map is hybrid equivalent to a member of the family $Per_1(1)$, and that this member is unique
if $K_f$ is connected. 
For more detailed studies on parabolic-like maps, consult \cite{Lu} for a dynamical description, \cite{Lu2} for a parameter space (of degree $2$ analytic families of parabolic-like maps) description, and \cite{Lu3} for an easy discussion on the results contained in the previous two articles. 

\section{Proof of the Theorem}\label{proof}

In the dynamical plane of the root of a satellite hyperbolic component of the multibrot set $\mathcal{M}_d$, a suitable iterate of the polynomial admits a degree $d$ parabolic-like restriction (see \cite[\S 3.1, Example 3]{Lu} for details of the construction in the case $d=2$, the case
$d>2$ being similar). The following lemma proves a rigidity statement for these parabolic-like maps.

\begin{lemma}[Rigidity of Parabolic-like Mappings]\label{rigidity_PL_maps}
Let $c_1$ and $c_2$ be the root points of two satellite hyperbolic components $H_1$ and $H_2$ (of period $n_1$ and $n_2$ respectively) of the Multibrot set $\mathcal{M}_d$.
If the parabolic-like mappings defined by the restrictions of $f_{c_1}^{\circ n_1}$ and $f_{c_2}^{\circ n_2}$ (around their critical value Fatou components) are conformally conjugate, then $c_1=c_2$ up to affine conjugacy.
\end{lemma}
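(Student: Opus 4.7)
My plan is to promote the given conformal conjugacy between the parabolic-like maps to a global conformal conjugacy between $f_{c_1}$ and $f_{c_2}$; since any conformal conjugacy between two degree $d$ unicritical polynomials fixing $\infty$ must be affine, the lemma will follow. To begin, observe that the conjugacy $\phi$ between $(f_{c_1}^{\circ n_1}, U_1', U_1, \gamma_1)$ and $(f_{c_2}^{\circ n_2}, U_2', U_2, \gamma_2)$ restricts to a conformal conjugacy of the parabolic germs at the characteristic points $z_{c_1}$ and $z_{c_2}$. Since the multiplicity of a parabolic fixed point is a conformal invariant, and since the multiplicity of $f_{c_i}^{\circ n_i}$ at $z_{c_i}$ equals $q_i+1$ with $q_i := n_i/k_i$, one has $q_1=q_2$. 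Combined with the common degree $d$ of the two parabolic-like maps and the preservation of the external combinatorics encoded by the dividing arcs $\gamma_i$, this forces $n_1=n_2=:n$ and $k_1=k_2$.

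Next I would extend $\phi$ to a conjugacy between $f_{c_1}$ and $f_{c_2}$ on all of $\widehat{\C}$. Already $\phi$ sends the critical value Fatou component $V_0^{c_1}$ of $f_{c_1}$ conformally onto $V_0^{c_2}$. Since the $n$ parabolic Fatou components form a single cycle under $f_{c_i}$, and $f_{c_i}\colon V_j^{c_i}\to V_{j+1}^{c_i}$ is a conformal isomorphism for $j=0,1,\ldots,n-2$ while $f_{c_i}\colon V_{n-1}^{c_i}\to V_0^{c_i}$ is the only branched step (the one containing the critical orbit), one defines
\begin{equation*}
\Phi|_{V_j^{c_1}} \, := \, f_{c_2}^{\circ j}\circ \phi\circ \bigl(f_{c_1}^{\circ j}|_{V_0^{c_1}}\bigr)^{-1},\qquad j=0,1,\ldots,n-1.
\end{equation*}
The functional equation $\phi\circ f_{c_1}^{\circ n}=f_{c_2}^{\circ n}\circ \phi$ is exactly what is needed to ensure that these formulas agree across the branched return from $V_{n-1}^{c_i}$ to $V_0^{c_i}$. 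Pulling back iteratively under $f_{c_i}$ then extends $\Phi$ to a conformal conjugacy on every Fatou component inside $K(f_{c_1})$. On the exterior, I would set $\Phi:=\psi_{c_2}^{-1}\circ \psi_{c_1}$ using the Böttcher coordinates $\psi_{c_i}\colon \widehat{\C}\setminus K(f_{c_i})\to \widehat{\C}\setminus \overline{\D}$, which automatically conjugates $f_{c_1}$ to $f_{c_2}$ there.

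Finally, these pieces should extend continuously across $J(f_{c_1})$ to a global homeomorphism $\Phi\colon \widehat{\C}\to \widehat{\C}$ conjugating $f_{c_1}$ to $f_{c_2}$ and conformal off the Julia set. Since a unicritical polynomial with a parabolic cycle carries no measurable invariant line field on its Julia set, $\Phi$ has vanishing Beltrami coefficient almost everywhere and is therefore a Möbius transformation; fixing $\infty$ and conjugating two degree $d$ unicritical polynomials, it must be of the form $z\mapsto \omega z$ with $\omega^{d-1}=1$, and hence $f_{c_1}$ and $f_{c_2}$ are affinely conjugate. The principal obstacle I foresee is verifying that the piecewise definition of $\Phi$ glues into a globally quasiconformal homeomorphism: the internal extension (through the parabolic Fatou cycle and its backward orbit) and the external Böttcher extension must agree on their common boundary $J(f_{c_1})$. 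This matching is ultimately enforced by the conformal (rather than merely hybrid) nature of the conjugacy, which preserves both the internal hybrid class and the external class of the parabolic-like maps, including the landing pattern of the external rays at the parabolic cycle.
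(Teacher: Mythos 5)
Your overall strategy---promote the conjugacy of the parabolic-like maps to a global conformal conjugacy of $f_{c_1}$ with $f_{c_2}$ on $\widehat{\C}$ and then conclude it is affine---is much more ambitious than what the paper does, and the two steps on which it rests are asserted rather than proved. First, the claim that $n_1=n_2$ and $k_1=k_2$ does not follow from what you have established. A conformal conjugacy of the germs gives only $q_1=q_2$, i.e.\ $n_1/k_1=n_2/k_2$; the parabolic-like maps are restrictions of the \emph{return maps} $f_{c_i}^{\circ n_i}$ to a neighborhood of a single Fatou component, so neither the degree $d$ nor the dividing arcs record the integer $n_i$ itself. (Indeed $n_1=n_2$ is a \emph{consequence} of the lemma, not an input available mid-proof.) Without $n_1=n_2$ your piecewise formula for $\Phi$ along the cycle of Fatou components cannot even be written down, since it is a conjugacy between $f_{c_1}$ and $f_{c_2}$, whereas the hypothesis only concerns their $n_i$-th iterates.

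Second, and more seriously, the gluing of the internal extension (obtained by pulling back through the grand orbit of the parabolic basin) with the B\"ottcher conjugacy on the basin of infinity along $J(f_{c_1})$ is exactly the content of the rigidity statement, so invoking ``preservation of the external class, including the landing pattern of external rays'' is circular: the parabolic-like map only sees a small portion of the dynamical plane near one Fatou component and does not a priori control the angles of the rays landing at the parabolic cycle, nor the combinatorics of the inverse branches needed to pull $\Phi$ back consistently to all bounded Fatou components. The absence of invariant line fields only helps \emph{after} a global quasiconformal conjugacy has been produced; you have not produced one. The paper takes a much more economical route: it extends the given conjugacy, using only the dynamics of $f_{c_1}^{\circ n_1}$ and local connectivity of the small filled Julia set, from a neighborhood of $\overline{U_1^1}$ to a neighborhood of the filled Julia set $K(h_1)$ of the polynomial-like restriction of $f_{c_1}^{\circ k_1}$ (via the ``characteristic ear'' decomposition), thereby obtaining a conformal conjugacy between two polynomial-like maps, and then quotes an existing rigidity theorem for conformally conjugate polynomial-like restrictions of unicritical polynomials (\cite[Corollary 10.2]{IM1}) to conclude. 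If you want to pursue your global approach you would need to first prove the combinatorial matching (equality of the characteristic angles/wakes of $c_1$ and $c_2$), which is not easier than the lemma itself.
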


\begin{proof}
Since $c_i$ is the root of a satellite component $H_i$ of period $n_i$, the polynomial $f_{c_i}$ has a unique parabolic cycle of period $k_i$ ($<n_i$) with multiplier a $q_i$-th root of unity, where $q_i=n_i/k_i$. The Taylor series expansion of $f_{c_i}^{\circ n_i}$ near $z_i$ is given by $\left(z+a_i(z-z_i)^{q_i+1}+O\left((z-z_i)^{q_i+2}\right)\right)$ for some $a_i\in\mathbb{C^*}$. In particular, there are $q_i$ attracting petals at the parabolic point $z_i$, and these petals are permuted transitively by $f_{c_i}^{\circ k_i}$. Let us start by noticing that if the parabolic-like mappings defined by the restrictions of $f_{c_1}^{\circ n_1}$ and $f_{c_2}^{\circ n_2}$ are conformally conjugate, then the parabolic germs of $f_{c_i}^{\circ n_i}$ at $z_i$ (for $i=1, 2$) are conformally conjugate. As the number of attracting petals of a parabolic germ is preserved by a topological conjugacy, it follows that $q_1=q_2=q$ (say).

We label the $q$ Fatou components of $f_{c_i}$ touching at the characteristic parabolic point $z_i$ counter-clockwise such that $U^1_i$ is the Fatou component of $f_{c_i}$ containing the critical value $c_i$. Since $c_i$ is the root of a satellite component with a $k_i$-periodic parabolic cycle, the polynomial $f_{c_i}^{\circ k_i}$ has a polynomial-like restriction ($h_i,V_i',V_i$) that is hybrid equivalent to some (degree $d$) $\frac{p_i}{q}$-rabbit (basilica if $q=2$) parameter on the boundary of the principal hyperbolic component of $\mathcal{M}_{d}$ (more precisely, $f_{c_i}^{\circ k_i}$ has a polynomial-like restriction $h_i$ that is hybrid equivalent to some polynomial $f_{c_i'}$ with a fixed point of multiplier $e^\frac{2\pi ip_i}{q}$). 

\begin{figure}[ht!]
\begin{center}
\includegraphics[scale=0.22]{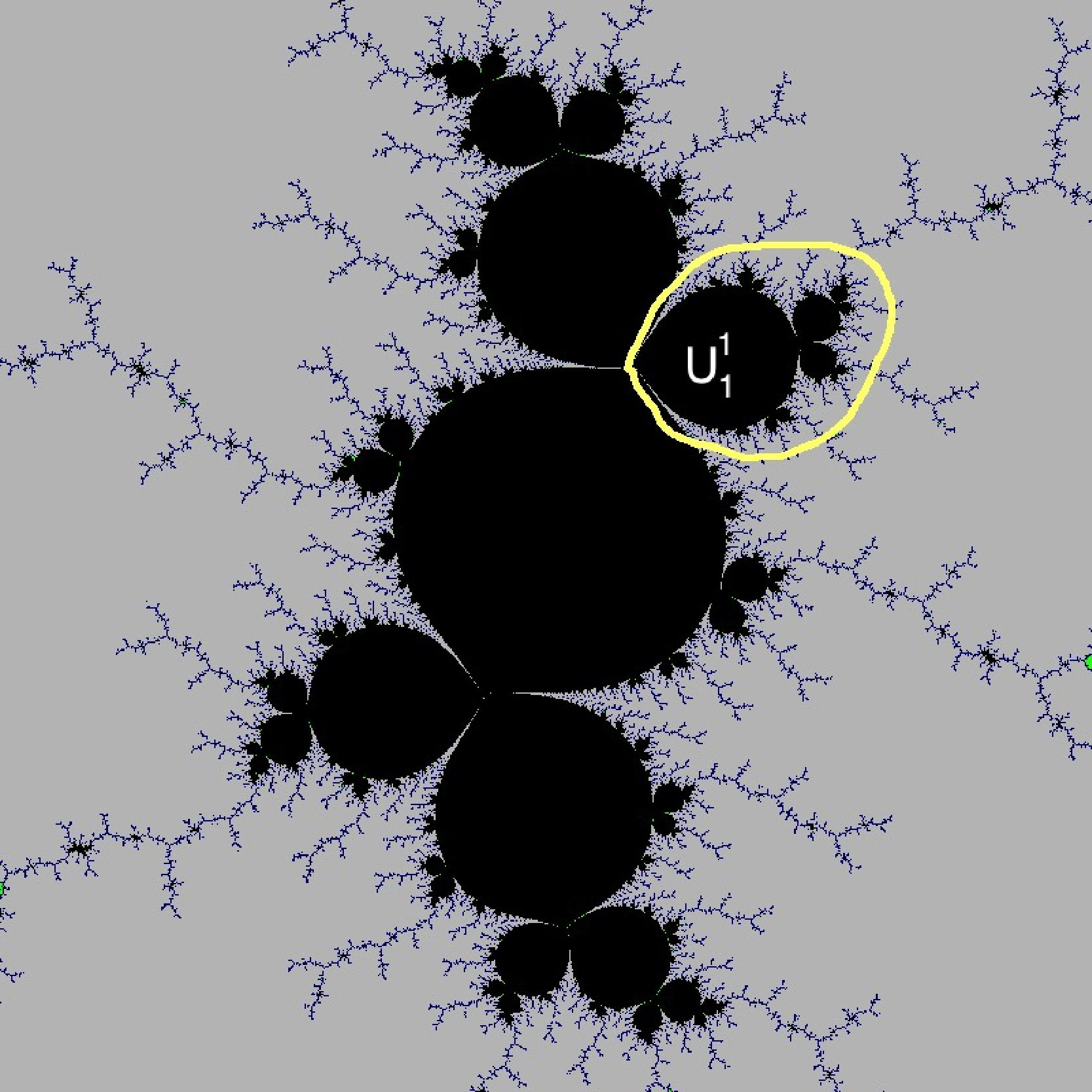}\ \includegraphics[scale=0.22]{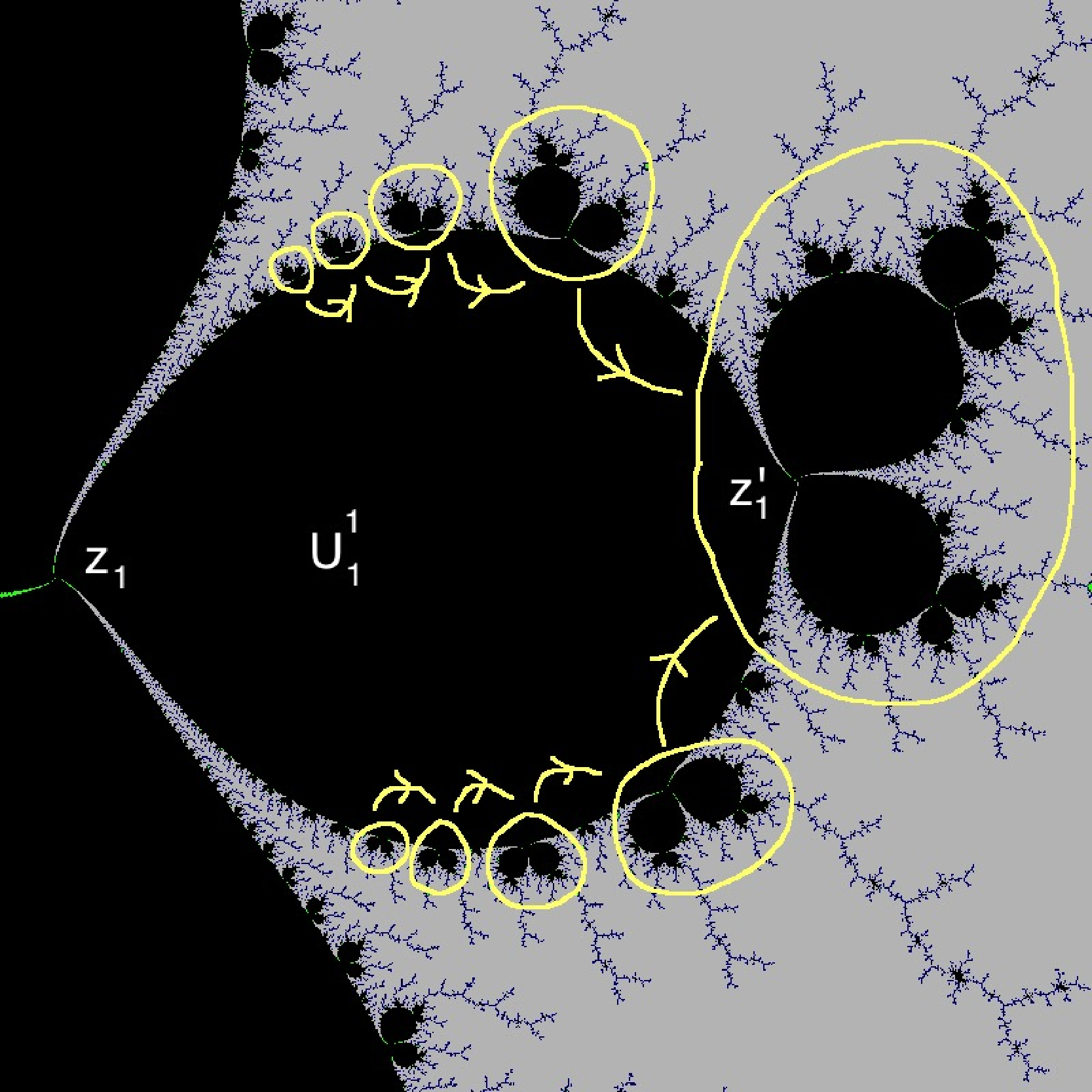}
\end{center}
\caption{Left: The characteristic ear $\mathcal{E}_1$ of $K(h_1)$ is enclosed by the yellow curve. Right: Under $f_{c_1}^{\circ n_1}$, each region enclosed by a yellow curve univalently maps to the `next' one. The initial conformal conjugacy between the two parabolic-like maps can be extended to a neighborhood of $\mathcal{E}_1$ using the univalent restrictions of $f_{c_1}^{\circ n_1}$ indicated in the figure.}
\label{germ_to_ear}
\end{figure}

Let $\eta$ be a conformal conjugacy between the parabolic-like restrictions of $f_{c_1}^{\circ n_1}$ and $f_{c_2}^{\circ n_2}$ in neighborhoods of $\overline{U_1^1}$ and $\overline{U_2^1}$ (respectively). A priori, $\eta$ is defined only in a neighborhood $W$ of $\overline{U_1^1}$. We can assume, possibly after shrinking $W$ (but ensuring that it still contains $\overline{U_1^1}$), that $f_{c_1}^{\circ n_1}$ has a unique critical point in $W$. We will now use the dynamics $f_{c_1}^{\circ n_1}$ to extend $\eta$ to a conformal conjugacy between $f_{c_1}^{\circ n_1}$ and $f_{c_2}^{\circ n_2}$ from a neighborhood of $K(h_1)$ to a neighborhood of $K(h_2)$.  

Let us define the ``characteristic ear'' $\mathcal{E}_1$ of $K(h_1)$ to be the closure of the connected component of $K(h_1)\setminus \{z_1\}$ containing the critical value $c_1$ (see Figure \ref{germ_to_ear} (Left)). Note that $f_{c_1}^{\circ n_1}(\mathcal{E}_1)=K(h_1)$. More precisely, there is a unique (strictly) pre-periodic point $z_1'$ on $\partial U_1^1$ such that $f_{c_1}^{\circ n_1}(z_1')=z_1$, and the closures $\mathcal{E}_2,\cdots,\mathcal{E}_q$ of the connected components of $\mathcal{E}_1\setminus\{z_1'\}$ \emph{not} containing $c_1$ are univalently mapped by $f_{c_1}^{\circ n_1}$ onto the closures of the connected components of $K(h_1)\setminus\{z_1\}$ \emph{not} containing $c_1$ (compare Figure \ref{ear}). Therefore, it suffices to first extend the conjugacy $\eta$ to a neighborhood of $\mathcal{E}_1$, and then use the above-mentioned univalent restrictions of $f_{c_1}^{\circ n_1}$ (on neighborhoods of $\mathcal{E}_2,\cdots,\mathcal{E}_q$) to spread it to a neighborhood of $K(h_1)$ (via the formula $f_{c_2}^{\circ n_2}\circ\eta\circ \left(f_{c_1}^{\circ n_1}\right)^{-1}$).

Since $K(h_1)$ is locally connected, the iterated pre-images of $\mathcal{E}_2\cup\cdots\cup\mathcal{E}_q$ under $f_{c_1}^{\circ n_1}$ (choosing suitable inverse branches of $f_{c_1}^{\circ n_1}$ such that the pre-images are attached to $\partial U_1^1$) shrink to the point $z_1$. Therefore, $W$ contains infinitely many such inverse images of $\mathcal{E}_2\cup\cdots\cup\mathcal{E}_q$. We can now iteratively extend $\eta$ to a neighborhood of $\mathcal{E}_1$ using the formula $f_{c_2}^{\circ n_2}\circ\eta\circ \left(f_{c_1}^{\circ n_1}\right)^{-1}$ (compare Figure \ref{germ_to_ear} (Right)).

\begin{figure}[ht!]
\begin{center}
\includegraphics[scale=0.22]{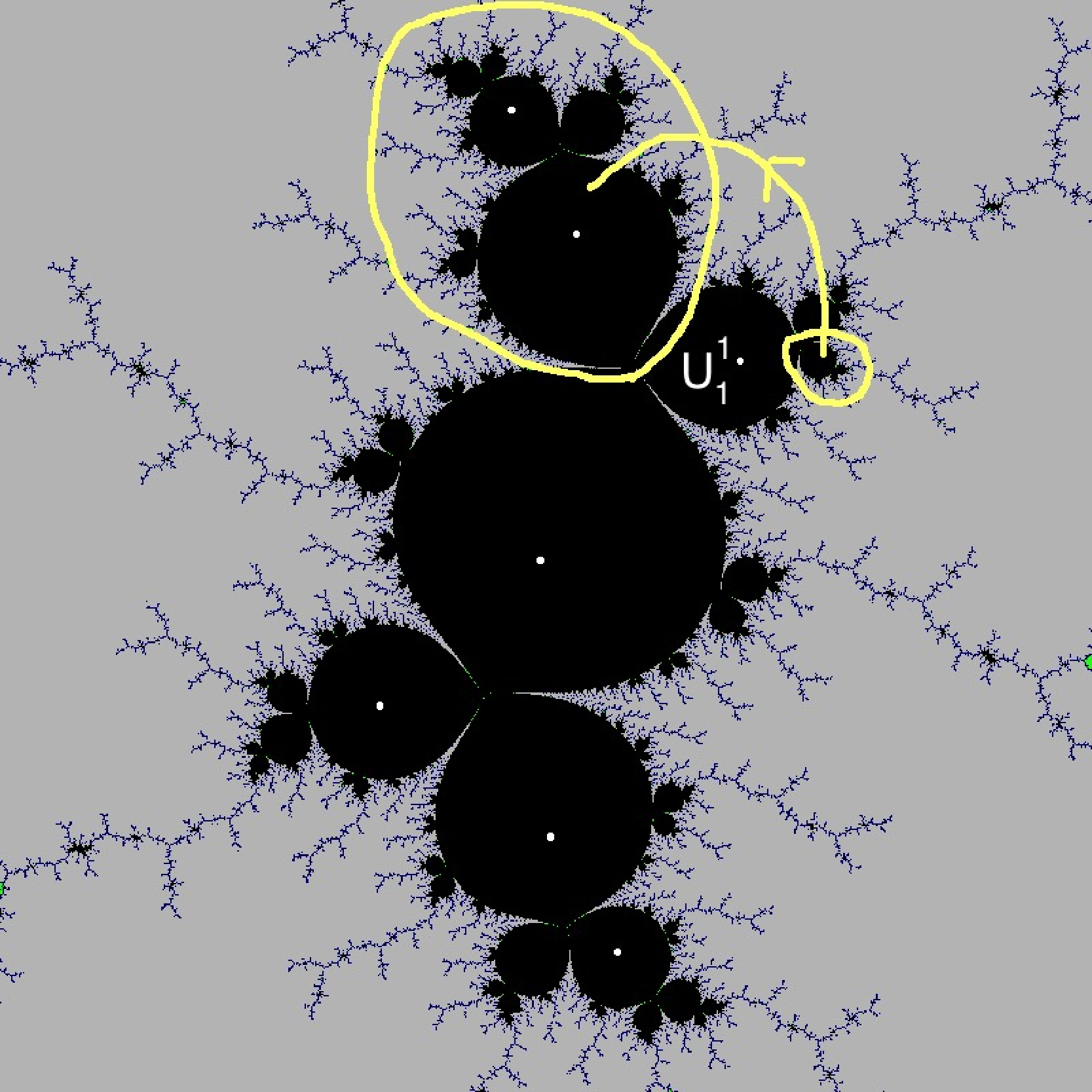}\ \includegraphics[scale=0.22]{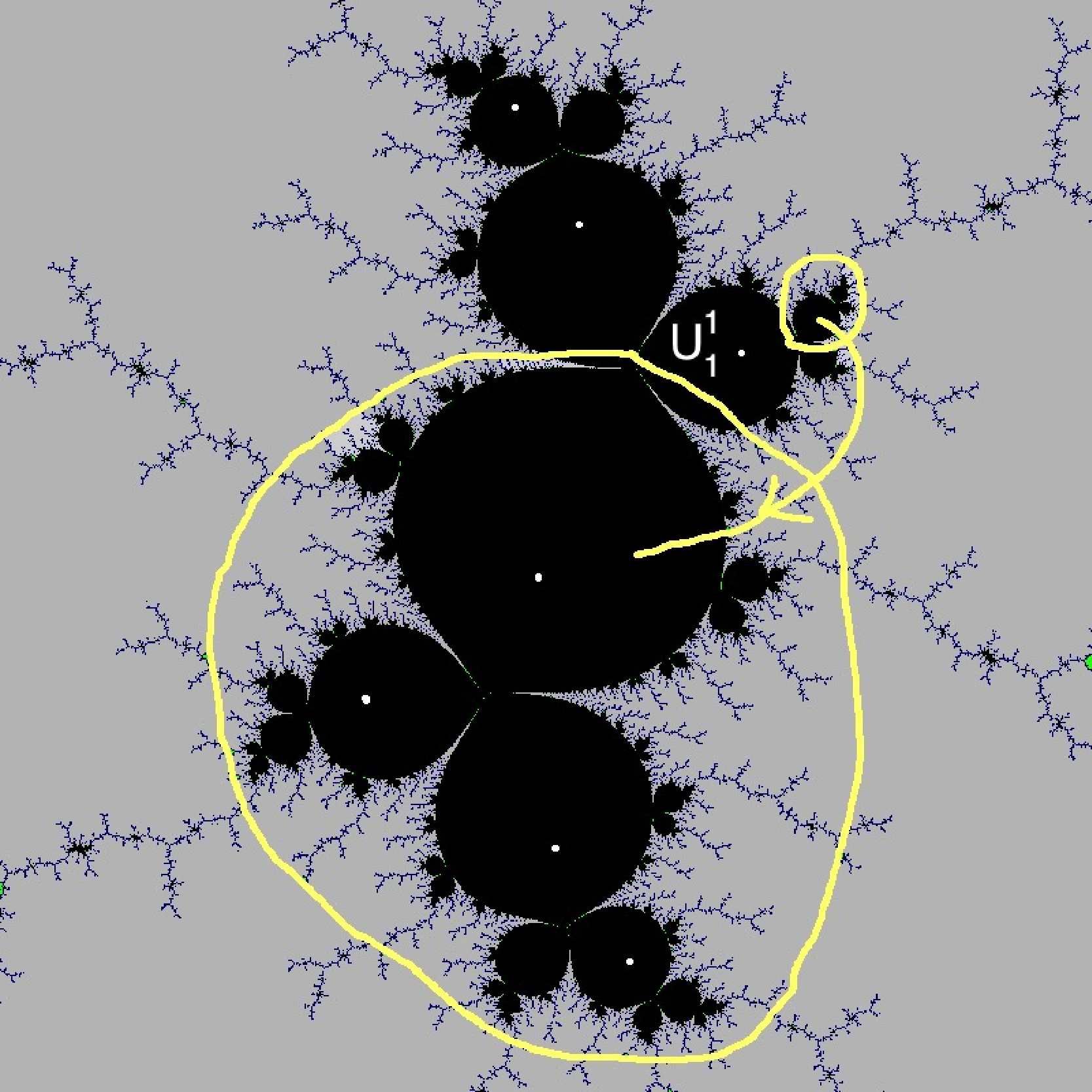}
\end{center}
\caption{Both figures show the small filled Julia set $K(h_1)$ in the dynamical plane of $f_{c_1}$, where $c_1$ is a satellite root of the Mandelbrot set with $q=3$. The critical points of $f_{c_1}^{\circ n_1}$ in $K(h_1)$ are marked. A small neighborhood of the characteristic ear $\mathcal{E}_1$ contains exactly one of these critical points. We obtain the required extension of $\eta$ by first extending it univalently to a neighborhood of the characteristic ear $\mathcal{E}_1$ and then using the univalent restrictions of $f_{c_1}^{\circ n_1}$ indicated in the figures to spread $\eta$ to a neighborhood of $K(h_1)$.}
\label{ear}
\end{figure}

To summarize, we have defined a (finite) sequence of maps $\{\eta_s\}_{s=0}^N$ such that $\eta_0$ is the original conjugacy $\eta$, and $\eta_s:= f_{c_2}^{\circ n_2}\circ\eta_{s-1}\circ \left(f_{c_1}^{\circ n_1}\right)^{-1}$ for $s=1, 2, \cdots, N$ (choosing suitable inverse branches). Moreover, $\mathrm{Dom}(\eta_s)\cap U_1^1\neq\emptyset$, for $s=1,2,\cdots,N$, and $\displaystyle \bigcup_{s=0}^{N} \mathrm{Dom}(\eta_s)\supset K(h_1)$. Since $f_{c_1}^{\circ n_1}$ fixes $U_1^1$ and $\eta_0$ is a conjugacy between $f_{c_1}^{\circ n_1}\vert_{U_1^1}$ and $f_{c_2}^{\circ n_2}\vert_{U_2^1}$, it follows from the construction that each $\eta_s$ extends the conformal map $\eta_0$ defined on $U_1^1$. Hence by uniqueness of analytic continuation, all these extensions match up to yield a conformal map $\eta$ defined on a neighborhood of $K(h_1)$ such that it conjugates $f_{c_1}^{\circ n_1}$ to $f_{c_2}^{\circ n_2}$.

Therefore, $\eta$ is a conformal conjugacy between the polynomial-like maps $h_1^{\circ q}$ ($=f_{c_1}^{\circ n_1}$) and $h_2^{\circ q}$ ($=f_{c_2}^{\circ n_2}$). By \cite[Corollary 10.2]{IM1}, we conclude that $f_{c_1}$ and $f_{c_2}$ are affinely conjugate. 
\end{proof}

We continue to work with parameters $c_1$ and $c_2$ that are root points of satellite hyperbolic components $H_1$ and $H_2$ (of period $n_1$ and $n_2$ respectively) of the Multibrot set $\mathcal{M}_d$. Our next lemma shows that a local conjugacy between the parabolic germs of $f_{c_1}^{\circ n_1}$ and $f_{c_2}^{\circ n_2}$ can be promoted to a conformal conjugacy between two suitable degree $d$ parabolic-like mappings. As in the proof of the previous lemma, we label the Fatou components of $f_{c_i}$ touching at the characteristic parabolic point $z_i$ counter-clockwise such that $U^1_i$ is the Fatou component of $f_{c_i}$ containing the critical value $c_i$. In order to investigate the consequences of a conformal conjugacy between two polynomial parabolic germs, we will need to use the concept of extended horn maps (see Section \ref{preliminars}). 

\begin{lemma}\label{equivalence}
Let $c_1$ and $c_2$ be the root points of two satellite hyperbolic components $H_1$ and $H_2$ (of period $n_1$ and $n_2$ respectively) of the Multibrot set $\mathcal{M}_d$, and $z_1$ and $z_2$ be the characteristic parabolic points of $f_{c_1}$ and $f_{c_2}$ (respectively). Then the following are equivalent.
\begin{itemize}
\item The degree $d$ parabolic-like mappings defined by the restrictions of $f_{c_1}^{\circ n_1}$ and $f_{c_2}^{\circ n_2}$ (with filled Julia set $\overline{U_1^1}$ and $ \overline{U_2^1}$ respectively) are conformally conjugate.

\item The (tangent-to-identity) parabolic germs given by the restrictions of $f_{c_1}^{\circ n_1}$ and $f_{c_2}^{\circ n_2}$ (around $z_1$ and $z_2$ respectively) are conformally conjugate.
\end{itemize}
\end{lemma}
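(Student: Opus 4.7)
The forward implication, that a conformal conjugacy between the parabolic-like maps restricts to a conformal conjugacy of the germs at $z_1$ and $z_2$, is immediate. The substance is the converse, so suppose $\eta : W_1 \to W_2$ is a conformal germ conjugacy between $f_{c_1}^{\circ n_1}|_{W_1}$ and $f_{c_2}^{\circ n_2}|_{W_2}$, where $W_i$ is a small neighborhood of $z_i$. My plan is to extend $\eta$ in two stages --- first across the attracting basin $U_1^1$, then across the repelling-side sector $\Delta_1'$ of the parabolic-like domain --- and glue the extensions.

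For the attracting side, I would shrink $W_1$ so that $W_1 \cap U_1^1$ contains an attracting fixed petal $\mathcal{P}_1$ of $f_{c_1}^{\circ n_1}$, and $\eta(\mathcal{P}_1) \subset U_2^1$ is an attracting fixed petal $\mathcal{P}_2$ of $f_{c_2}^{\circ n_2}$. In attracting Fatou coordinates $\eta|_{\mathcal{P}_1}$ becomes a translation, so it descends to a biholomorphism $\Phi$ between the attracting Ecalle cylinders $\cC_1^{\mathrm{att}}$ and $\cC_2^{\mathrm{att}}$ of the two germs. The key input from Section \ref{preliminars} is that, since each $f_{c_i}$ is unicritical, the associated extended horn map of the germ of $f_{c_i}^{\circ n_i}$ at $z_i$ possesses a unique critical value, and this critical value is the image in $\cC_i^{\mathrm{att}}$ of the grand orbit of the critical value of $f_{c_i}^{\circ n_i}|_{U_i^1}$. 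Since $\eta$ conjugates the germs, it conjugates the extended horn maps, so $\Phi$ carries the marked point corresponding to the critical orbit in $U_1^1$ to the marked point corresponding to the critical orbit in $U_2^1$.

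This matching of marked points is precisely what I need to extend $\eta$ to a biholomorphism $\tilde\eta : U_1^1 \to U_2^1$ by iterated pullback along the dynamics: for $w \in U_1^1$, choose $N$ large enough that $f_{c_1}^{\circ n_1 N}(w) \in \mathcal{P}_1$, and set $\tilde\eta(w)$ to be the preimage of $\eta(f_{c_1}^{\circ n_1 N}(w))$ under $f_{c_2}^{\circ n_2 N}$ whose branch history in $U_2^1$ matches that of $w$ in $U_1^1$. Independence of $N$ follows from the conjugacy relation on $\mathcal{P}_1$, and independence of the choice of inverse branches at each branching step --- the step where an iterated preimage encounters a critical point --- is ensured precisely by the matching of marked points, which says that critical orbits are carried to critical orbits. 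Riemann's removable singularity theorem then extends $\tilde\eta$ holomorphically across the critical orbit, and together with the original germ conjugacy $\eta$ near $z_1$, this yields a conformal conjugacy on a neighborhood of $\overline{U_1^1}$.

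For the repelling side, the germ conjugacy already defines $\eta$ on a neighborhood of $z_1$ that meets $\Delta_1'$ and sends the dividing arc $\gamma_1$ to $\gamma_2$. Because $f_{c_1}^{\circ n_1} : \Delta_1' \to \Delta_1$ and $f_{c_2}^{\circ n_2} : \Delta_2' \to \Delta_2$ are isomorphisms by the parabolic-like axioms, the formula $w \mapsto (f_{c_2}^{\circ n_2}|_{\Delta_2'})^{-1}(\eta(f_{c_1}^{\circ n_1}(w)))$ propagates $\eta$ univalently throughout $\Delta_1'$. Gluing the two extensions along their common domain near the germ produces the desired conformal conjugacy between the parabolic-like restrictions. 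The decisive obstacle --- and the place where unicriticality is used --- is the single-valuedness of the pullback extension over $U_1^1$; it is guaranteed by the horn map critical-value matching recalled in Section \ref{preliminars}, without which the iterated choice of inverse branch would fail to be canonical.
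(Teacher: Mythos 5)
Your attracting-side argument is essentially the one in the paper: pass to Ecalle cylinders, use the fact that the extended horn map of a unicritical polynomial germ has a unique critical value sitting over the critical orbit, conclude that the induced cylinder isomorphism matches the marked critical orbits (equivalently $\psi^{\mathrm{att},1}_{c_1}(c_1)-\psi^{\mathrm{att},1}_{c_2}(c_2)\in\Z$), and then extend over $U_1^1$ by iterated lifting. Two small points there: you assert $\eta(\mathcal{P}_1)\subset U_2^1$, but a priori the germ conjugacy may send the petal of $U_1^1$ into another component $U_2^k$ touching $z_2$; one must first post-compose with $f_{c_2}^{\circ k_2(1-k)}$ to arrange this. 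Also your formula for the extension into $\Delta_1'$ runs in the wrong direction: $f_{c_1}^{\circ n_1}$ pushes points of $\Delta_1'$ \emph{away} from $z_1$ into the larger set $\Delta_1$, so evaluating $\eta\circ f_{c_1}^{\circ n_1}$ requires $\eta$ to be already defined farther out; the outward propagation must iterate the inverse branch $\left(f_{c_1}^{\circ n_1}\vert_{\Delta_1'}\right)^{-1}$ on the source side.

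The genuine gap is at the end. A conformal conjugacy of the parabolic-like restrictions with filled Julia sets $\overline{U_1^1}$ and $\overline{U_2^1}$ must be defined on a region $\Omega'$ containing $K_f\setminus\{z_1\}$, hence on an open neighborhood of every point of $\partial U_1^1\setminus\{z_1\}$. What your construction delivers is a conjugacy on $U_1^1\cup V\cup\Delta_1'$, where $V$ is a small neighborhood of $z_1$; this set contains no neighborhood of any point of $\partial U_1^1$ outside $V$, so ``gluing the two extensions'' does not produce a parabolic-like conjugacy. The missing step, which is where the paper does real work after the lifting over $U_1^1$, is to propagate the conjugacy across the whole boundary: by Montel/expansion one has $\bigcup_{s} f_{c_1}^{\circ sn_1}\left(V\cap\partial U_1^1\right)=\partial U_1^1$, and since $f_{c_1}$ has no critical point on $\partial U_1^1$, the functional equation $\eta\circ f_{c_1}^{\circ sn_1}=f_{c_2}^{\circ sn_2}\circ\eta$ pushes $\eta$ forward to a neighborhood of each boundary point; all these local extensions agree with the already-constructed $\eta$ on $U_1^1$, so uniqueness of analytic continuation glues them into a single univalent map on a neighborhood of $\overline{U_1^1}$, after which one checks the extension is proper of degree one. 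Without this boundary propagation the second item of the lemma has not been shown to imply the first.
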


\begin{proof}
Conformal conjugacy of the parabolic-like maps clearly implies conformal conjugacy of the corresponding germs. So we only need to show that when $g_1:=f_{c_1}^{\circ n_1}\vert_{N_{1}}$ and $g_2:=f_{c_2}^{\circ n_2}\vert_{N_{2}}$ are conformally conjugate by some local biholomorphism $\phi_1:N_{1}\to N_{2}$ (where $N_i$ is a small neighborhood of $z_i$), the degree $d$ parabolic-like maps $f_{c_1}^{\circ n_1}$ and $f_{c_2}^{\circ n_2}$ (with filled Julia set $\overline{U_1^1}$ and $\overline{U_2^1}$ respectively) are also conformally conjugate. We now proceed to prove this.

For $i=1,2$, let us suppose that the period of the characteristic parabolic point $z_i$ of $f_{c_i}$ be $k_i$. Then $f_{c_i}$ has $q_i=n_i/k_i$ attracting petals at $z_i$ which are permuted transitively by $f_{c_i}^{\circ k_i}$. Since two conformally conjugate germs have the same number of attracting petals, we have that $q_1=q_2=q$ (say). Note that $\phi_1$ must map an attracting petal $\mathcal{P}^{\textrm{att},1}_{c_1}\subset N_1\cap U_1^1$ to an attracting petal $\mathcal{P}^{\textrm{att},k}_{c_2}\subset N_2\cap U_2^k$ for some $k\in\{1,2,\cdots,q\}$. So, $\phi :=  f_{c_2}^{\circ k_2(1-k)}\circ\phi_1$ is a conformal conjugacy between $g_1$ and $g_2$ such that it maps $\mathcal{P}^{\textrm{att},1}_{c_1}$ to a petal  $\mathcal{P}^{\textrm{att}, 1}_{c_2}\subset N_2\cap U_2^1$.

For $k\in \mathbb{Z}/q\mathbb{Z}$, if $\psi^{\mathrm{att},k}_{c_2}$ is an extended attracting Fatou coordinate for $f_{c_2}^{\circ n_2}$ in $U_{2}^k$, then there exists an extended attracting Fatou coordinate $\psi^{\mathrm{att},k}_{c_1}$ for $f_{c_1}^{\circ n_1}$ in $U_{1}^k$ such that $\psi^{\mathrm{att},k}_{c_1}=\psi^{\mathrm{att},k}_{c_2}\circ \phi$ in their common domain of definitions. 

Similarly for $k\in \mathbb{Z}/q\mathbb{Z}$, if $\psi^{\mathrm{rep},k}_{c_2}$ is a repelling Fatou coordinate for $f_{c_2}^{\circ n_2}$ at $z_2$, then $\psi^{\mathrm{rep},k}_{c_1}:=\psi^{\mathrm{rep},k}_{c_2}\circ \phi$ is a repelling Fatou coordinate of $f_{c_1}^{\circ n_1}$ at $z_1$. The inverses of these repelling Fatou coordinates admit global extensions, and we denote them by $\Psi^{\mathrm{rep},k}_{c_i}$ (compare \cite[\S 2.5]{BE}).

Using the extended attracting and repelling Fatou coordinates described above, we can define extended horn maps $h^{+}_{c_i,k}:=\psi^{\mathrm{att},k}_{c_i}\circ\Psi^{\mathrm{rep},k}_{c_i}$ and $h^{-}_{c_i,k}:=\psi^{\mathrm{att},k+1}_{c_i}\circ\Psi^{\mathrm{rep},k}_{c_i}$ of $f_{c_i}^{n_i}$ at $z_i$. Since these extended attracting and repelling Fatou coordinates  are related by $\phi$ in neighborhoods of $z_i$, it follows that $h^\pm_{c_1,k}=h^\pm_{c_2,k}$ for $k=1, 2, \cdots, k$. By \cite[Proposition 4]{BE}, each $h^{+}_{c_i,1}$ is a ramified covering with a unique critical value $\Pi\left(\psi^{\mathrm{att},1}_{c_i}(c_i)\right)$, where $\Pi(Z)=e^{2\pi iZ}$. It follows that $\Pi\left(\psi^{\mathrm{att},1}_{c_1}(c_1)\right)=\Pi\left(\psi^{\mathrm{att},1}_{c_2}(c_2)\right)$. Therefore, $\psi^{\mathrm{att},1}_{c_1}(c_1)$$-\psi^{\mathrm{att},1}_{c_2}(c_2)$$=r\in \mathbb{Z}$. Thus, we can normalize the attracting Fatou coordinates such that $\psi^{\mathrm{att},1}_{c_1}(c_1)=0$ and $\psi^{\mathrm{att},1}_{c_2}(c_2)=-r$.

Let us define the map $\widetilde \psi^{\mathrm{att},1}_{c_2}:= \psi^{\mathrm{att},1}_{c_2} \circ g_2^{\circ r}$ on $\mathcal{P}^{\textrm{att},1}_{c_2}$. Clearly $\widetilde \psi^{\mathrm{att},1}_{c_2}$
is an attracting Fatou coordinate for $f_{c_2}^{\circ n_2}$ at $z_2$ such that for all $x \in \mathcal{P}^{\textrm{att},1}_{c_2}$, we have $\widetilde\psi^{\mathrm{att},1}_{c_2}(x)-\psi^{\mathrm{att},1}_{c_2}(x)=r$. By analytic continuation, we have an extended attracting Fatou coordinate $\widetilde \psi^{\mathrm{att},1}_{c_2}:U_{2}^1 \rightarrow \C$ for $f_{c_2}^{\circ n_2}$ such that $$\widetilde \psi^{\mathrm{att},1}_{c_2}(c_2)=\psi^{\mathrm{att},1}_{c_2}(c_2)+r=0=\psi^{\mathrm{att},1}_{c_1}(c_1).$$

Define $\eta:= (\widetilde \psi^{\mathrm{att},1}_{c_2})^{-1} \circ \psi^{\mathrm{att},1}_{c_1}: N_1\cap U_1^1 \rightarrow N_2 \cap U_2^1$, then $\eta$ is a conformal
conjugacy between $f_{c_1}^{\circ n_1}\vert_{N_1}$ and $f_{c_2}^{\circ n_2}\vert_{N_2}$ which extends by iterated lifting to a conformal conjugacy $\eta:U_{1}^1 \rightarrow U_{2}^1$ between $f_{c_1}^{\circ n_1}\vert_{U^1_1}$ and $f_{c_2}^{\circ n_2}\vert_{U^1_2}$.\footnote{Here is an alternative route to extend $\eta$ to the entire Fatou component. We can choose Riemann maps $\phi_{c_i}: U_{i}^1 \rightarrow \mathbb{D}$ with $\phi_{c_i}(c_i)=0$ such that $\phi_{c_i}$ conjugates $f_{c_i}^{\circ n_i}\vert_{U_i^1}$ to the Blaschke product $B(z)=\frac{3z^2+1}{3+z^2}$. An easy computation in Fatou coordinates now shows that $\phi_{c_2}^{-1}\circ \phi_{c_1}$ extends the local conjugacy $\eta$ to the entire immediate basin $U_1^1$ such that it conjugates $f_{c_1}^{\circ n_1}$ on $U_1^1$ to $f_{c_2}^{\circ n_2}$ on $U_2^1$.} Abusing notation, we will denote this extended conjugacy by $\eta$. Since the basin boundaries are locally connected, by Caratheodory's theorem the conformal conjugacy $\eta$ extends as a homeomorphism from $\partial U_{1}^1$ onto $\partial U_{2}^1$. Note also that by definition, $\eta=g_2^{\circ (-r)}\circ\phi$ in their common domain of definition. Therefore, $\eta$ admits an analytic continuation to a neighborhood $V$ of the point $z_1$, and continues to be a conjugacy between the germs $g_1$ and $g_2$. 

We will now extend this conformal conjugacy to a conformal conjugacy $\eta$ between a neighborhood of $\overline{U_1^1}$ and a neighborhood of $\overline{U_2^1}$. By Montel's theorem, we have $\displaystyle \bigcup_{s\in \mathbb{N}} f_{c_1}^{\circ sn_1}\left( V \cap \partial U_{1}^1 \right) = \partial U_{1}^1$. Since none of the $f_{c_1}^{\circ sn_1}$ has a critical point on $\partial U_{1}^1$, we can extend $\eta$ to a neighborhood of each point of $\partial U_{1}^1$ by using the functional equation $\eta \circ f_{c_1}^{\circ sn_1} = f_{c_2}^{\circ sn_2} \circ \eta$. Since all of these extensions at various points of $\partial U_{1}^1$ extend the already defined (and conformal) common map $\eta$, uniqueness of analytic continuation yields
an analytic extension of $\eta$ to a neighborhood of $\overline{U_{1}^1}$. By construction, this extension is clearly a proper holomorphic map, and assumes every point in $U_2^1$ precisely once. Therefore, the extended $\eta$ from a neighborhood of $\overline{U_{1}^1}$ onto a neighborhood of $\overline{U_{2}^1}$ has degree one. So $\eta$ is a conformal conjugacy between $f_{c_1}^{\circ n_1}$ and $f_{c_2}^{\circ n_2}$. This shows that the parabolic-like mappings defined by $f_{c_1}^{\circ n_1}$ and $f_{c_2}^{\circ n_2}$ in neighborhoods of $\overline{U_1^1}$ and $\overline{U_2^1}$ (respectively) are conformally conjugate. 
\end{proof}

We are now ready to prove the main result of this paper.

\begin{proof}[Proof of Theorem \ref{Rigidity_Satellite}]
By hypothesis, $c_1$ (respectively $c_2$) is the root or a co-root point of a hyperbolic component $H_1$ (respectively $H_2$) of period $n_1$ (respectively $n_2$) of $\mathcal{M}_d$. Also, $z_1$ (respectively $z_2$) is the characteristic parabolic point of $f_{c_1}$ (respectively $f_{c_2}$). For $i=1,2$, the Taylor series expansion of $f_{c_i}^{\circ n_i}$ around $z_i$ is given by $\left(z+a_i(z-z_i)^{q_i+1}+O\left((z-z_i)^{q_i+2}\right)\right)$ for some $q_i\geq 1$ and $a_i \in \mathbb{C^*}$. Note that $q_i$ is the number of attracting petals of $f_{c_i}$ at $z_i$. 

We assume that there exist neighborhoods $N_1$ and $N_2$ of $z_1$ and $z_2$ such that the parabolic germs $f_{c_1}^{\circ n_1}\vert_{N_1}$ and $f_{c_2}^{\circ n_2}\vert_{N_2}$ are conformally conjugate. Evidently, such a conjugacy implies that $q_1=q_2$. We will now consider two cases.\\

\noindent\textbf{Case 1: ($q_1=q_2=1$).}
In this case, each $c_i$ is a co-root or a primitive root of $H_i$ (compare Section \ref{preliminars}). Hence the unique parabolic cycle of $f_{c_i}$ has period $n_i$; i.e. the period of $z_i$ is $n_i$. Since the germs $f_{c_1}^{\circ n_1}\vert_{N_1}$ and $f_{c_2}^{\circ n_2}\vert_{N_2}$ are conformally conjugate, it follows by \cite[Theorem 1.4]{IM1} that $f_{c_1}$ and $f_{c_2}$ are affinely conjugate.\\

\noindent\textbf{Case 2: ($q_1=q_2>1$).}
In this case, each $c_i$ is a satellite root of $H_i$. Since the germs $f_{c_1}^{\circ n_1}\vert_{N_1}$ and $f_{c_2}^{\circ n_2}\vert_{N_2}$ are conformally conjugate, it follows by Lemma \ref{equivalence} that the degree $d$ parabolic-like mappings defined by the restrictions of $f_{c_1}^{\circ n_1}$ and $f_{c_2}^{\circ n_2}$ (around the critical value Fatou components $U_1^1$ and $U_2^1$ respectively) are conformally conjugate. By Lemma \ref{rigidity_PL_maps}, we can now conclude that $f_{c_1}$ and $f_{c_2}$ are affinely conjugate. 
\end{proof}
\bibliographystyle{alpha}
\bibliography{germs}
 
\end{document}